\newtheorem{thm}{Theorem}[section]
\newtheorem{theorem}{Theorem}[section]
\newtheorem{lem}[thm]{Lemma}
\newcommand{\RR}{\mathbb{R}}
\newcommand{\bd}{\partial}
\newcommand{\diver}{{\rm div}}
\newcommand{\haus}{\mathcal{H}^{N-1}}
\newcommand{\Capa}{{\rm Cap}}
\newcommand{\Om}{\Omega}
\newcommand{\pa}{\partial}
\newcommand{\sdue}[1]{S^2_{ij}(#1)}
\newcommand{\tr}{{\sf{Tr}}}
\begin{document}
\title{A note on an overdetermined problem for the capacitary potential}

\author[C. Bianchini]{Chiara Bianchini}
\author[G. Ciraolo]{Giulio Ciraolo}

\address{C. Bianchini, Dip.to di Matematica e Informatica ``U. Dini'', Universit\`a degli Studi di Firenze, Viale Morgagni 67/A, 50134 Firenze - Italy}
\email{cbianchini@math.unifi.it}

\address{G. Ciraolo,  Dip.to di Matematica e Informatica, Universit\`a di Palermo, Via Archirafi 34, 90123, Palermo - Italy}
\email{giulio.ciraolo@unipa.it}

\maketitle

\begin{abstract}
We consider an overdetermined problem arising in potential theory for the capacitary potential and we prove a radial symmetry result.
\end{abstract}

\noindent {\footnotesize {\bf AMS subject classifications.} 35N25, 35B06, 31B15, 35R25, 52A40.}

\noindent {\footnotesize {\bf Key words.} Overdetermined boundary value problems, electrostatic potential, symmetry, capacity.}


\section{Introduction}
In this note we deal with an overdetermined problem for the electrostatic potential. The electrostatic capacity of a bounded set $\Omega \subset \RR^n$, $n \geq 3$, is defined by 
\begin{equation} \label{capacity_def} 
\Capa (\Omega) = \inf \left\{ \int_{\RR^n} |Dv|^2 dx \: : \ v \in C_c^{\infty} (\RR^n)\,, \ v(x) \geq 1 \quad \forall x \in \Omega \right\} \,, 
\end{equation}
where $C_c^{\infty} (\RR^n)$ denotes the set of $C^\infty$ functions having compact support. It is well-known that it can be equivalently obtained via the asymptotic expansion of the so-called \emph{electrostatic potential} of $\Omega$ (or \emph{capacitary function} of $\Omega$), i.e. 
\begin{equation} \label{capacity_def_asympt}
\Capa(\Omega) = (n-2) \omega_n \lim_{|x| \to \infty} u(x) |x|^{n-2} \,,
\end{equation}
where $\omega_n$ denotes the surface area of the unit sphere in $\RR^n$, and $u$ realizes the minimum of problem \eqref{capacity_def} and hence satisfies
\begin{equation} \label{capacity_EL}
\begin{cases}
\Delta u = 0 & \textmd{in } \RR^n \setminus \overline{\Omega} \,,\\
u=1 & \textmd{on } \partial \Omega \,, \\
\lim_{|x|\to + \infty} u(x) = 0 \,. & 
\end{cases}
\end{equation}
We mention that the electrostatic potential $u$ represents the potential energy of the electrical field induced by the conductor  $\Omega$, normalized so that the voltage difference between $\partial \Omega$ and infinity is one, and hence $\Capa(\Omega)$ represents the total electric charge needed to induce the potential $u$ (see for instance \cite{Ke}). 

A classical question in potential theory is the study of symmetry properties for problem \eqref{capacity_EL}. More precisely, one imposes an extra assumption to Problem \eqref{capacity_EL} and studies how such an overdetermination reflects on the domain $\Omega$. In particular, one can ask whether certain geometric properties of the constraint are inherited by the domain $\Omega$. In this respect, a typical problem is the so-called Serrin's exterior problem, where one assumes that 
\begin{equation} \label{Du_const}
|Du|=c \quad \textmd{on } \partial \Omega \,,
\end{equation}
where $c$ is a positive constant, and one proves that a solution to \eqref{capacity_EL}-\eqref{Du_const} exists if and only if the domain $\Omega$ is a ball. This kind of problem has been successfully solved in \cite{Re} by using the method of moving planes. Other similar problems and related results can be found in \cite{BC,BCS,CFG,Re2,Sa,Se}.

In this note we discuss two kinds of overdeterminations involving the mean curvature $H_{\partial \Omega}$ of $\partial \Omega$ (that is the average of the principal curvatures of $\partial \Omega$). More precisely, we prove the following theorem.

\begin{theorem} \label{thm_main}
Let $\Omega \subset \RR^n$ be a bounded domain with boundary of class $C^2$ and let $u$ be the solution of \eqref{capacity_EL}. If $u$ and $\Omega$ are such that 
\begin{equation} \label{H1}
\int_{\partial \Omega} |Du|^2\left[ H_{\partial \Omega} - \frac{|Du|}{n-2} \right] \, d \mathcal{H}^{n-1} \leq 0,
\end{equation}
or
\begin{equation} \label{H2}
\int_{\bd\Om}|Du|^2\left[ (n-1) H_{\partial \Omega} - \frac{n|Du|}{2(n-2)}\right]  d\haus \leq  \frac{(n-2)^3}{2} \omega_n  \left(\frac{\Capa(\Omega)}{(n-2) \omega_n} \right)^{\frac{n-4}{n-2}}  \,,
\end{equation}
then $\Omega$ is a ball and $u$ is radially symmetric.
\end{theorem}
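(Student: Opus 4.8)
The plan is to derive both inequalities from a single monotonicity formula for a scale-invariant quantity evaluated on the level sets of $u$. Write $u$ for the capacitary potential, so $0<u<1$ in $\RR^n\setminus\overline{\Om}$, $u=1$ on $\bd\Om$, and $u(x)=a|x|^{2-n}+O(|x|^{1-n})$ at infinity with $a=\Capa(\Om)/((n-2)\omega_n)$ (with matching gradient and Hessian estimates on the remainder). For $t\in(0,1)$ set $\Gamma_t=\{u=t\}$; for regular values of $t$ (a full-measure set by Sard's theorem, including $t$ near $1$ since $|Du|>0$ near $\bd\Om$ by Hopf's lemma) $\Gamma_t$ is a closed hypersurface enclosing $\Om$. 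Since $u$ is constant on $\Gamma_t$ we have $Du=-|Du|\,\nu_t$, with $\nu_t$ pointing away from $\Om$, and harmonicity gives the pointwise relation $D^2u(Du,Du)=(n-1)H_t|Du|^3$, where $H_t$ is the mean curvature of $\Gamma_t$ with respect to $\nu_t$.

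First I would record three consequences of the divergence theorem on the bounded region $\{u>t\}$ enclosed between $\bd\Om$ and $\Gamma_t$, the contributions from infinity vanishing by the asymptotic expansion: the conserved flux $\int_{\Gamma_t}|Du|\,d\mathcal{H}^{n-1}=\Capa(\Om)$; the Bochner identity $\tfrac12\Delta|Du|^2=|D^2u|^2$, which yields $(n-1)\big(A(t_2)-A(t_1)\big)=\int_{\{t_1<u<t_2\}}|D^2u|^2\,dx\geq0$ for $A(t):=\int_{\Gamma_t}H_t|Du|^2\,d\mathcal{H}^{n-1}$, with $A(0^+)=0$; and, by the first variation of area along the flow $x\mapsto\Gamma_{u(x)}$, the formula $B'(t)=2(n-1)A(t)$ for $B(t):=\int_{\Gamma_t}|Du|^3\,d\mathcal{H}^{n-1}$. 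Letting $t\to1$ in the Bochner identity gives in particular the boundary identity $(n-1)\int_{\bd\Om}H_{\bd\Om}|Du|^2\,d\mathcal{H}^{n-1}=\int_{\RR^n\setminus\overline{\Om}}|D^2u|^2\,dx$, which is what converts the curvature terms in \eqref{H1} and \eqref{H2} into a bulk Hessian integral.

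The heart of the proof is the monotonicity of
\[
\Psi(t):=t^{-\frac{2(n-1)}{n-2}}\int_{\Gamma_t}|Du|^3\,d\mathcal{H}^{n-1},
\]
namely that $\Psi$ is non-decreasing on $(0,1)$ and constant precisely when $u$ is radial. To locate the source of the sign, decompose $D^2u$ in a principal frame of $\Gamma_t$: one finds $|D^2u|^2=(n-1)^2H_t^2|Du|^2+2|\nabla_T|Du||^2+|Du|^2\sum_\alpha\kappa_\alpha^2$ together with $\sum_\alpha\kappa_\alpha^2\geq(n-1)H_t^2$, with equality iff $\Gamma_t$ is umbilic. Using $B'=2(n-1)A$ one checks that $\Psi'(t)\geq0$ is equivalent to $t\,A(t)\geq\tfrac1{n-2}B(t)$; the explicit factor $t$ must then be removed by a further integration by parts on $\{u>t\}$ combined with the flux identity, after which $\Psi'(t)$ should appear as an integral over $\Gamma_t$ of a nonnegative combination of $|\nabla_T|Du||^2$ and $\sum_\alpha\kappa_\alpha^2-(n-1)H_t^2$. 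Producing this manifestly nonnegative form — taming the explicit $t$ and handling the critical set of $u$ — is the step I expect to be the main obstacle; it is the Agostiniani–Mazzieri type monotonicity along the capacitary level sets. The value at infinity follows from the asymptotic expansion, giving $\Psi(0^+)=(n-2)^3\omega_n\big(\Capa(\Om)/((n-2)\omega_n)\big)^{\frac{n-4}{n-2}}$.

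Finally I would read off the theorem. Using $A(1)=\tfrac1{2(n-1)}B'(1)$ and the boundary Bochner identity, a short computation rewrites the left-hand side of \eqref{H1} as $\tfrac1{2(n-1)}\Psi'(1)$, and the left-hand side minus the right-hand side of \eqref{H2} as $\tfrac12\big(\Psi'(1)+\Psi(1)-\Psi(0^+)\big)$, the point being that $\Psi(1)=B(1)$ and $\frac{2(n-1)}{n-2}-\frac{n}{n-2}=1$. Since monotonicity yields $\Psi'(1)\geq0$ and $\Psi(1)\geq\Psi(0^+)$, both expressions are nonnegative, so each hypothesis \eqref{H1}, \eqref{H2} forces the corresponding expression to vanish. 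In either case $\Psi'(1)=0$, hence the nonnegative integrand defining $\Psi'(1)$ vanishes on $\bd\Om$: this means $|Du|$ is constant on $\bd\Om$ and $\bd\Om$ is totally umbilic, so it is a sphere. Therefore $\Om$ is a ball and, by uniqueness for \eqref{capacity_EL}, $u$ is radially symmetric, which completes the argument.
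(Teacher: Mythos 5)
Your scaffolding is correct: the Bochner-based identity $(n-1)\bigl(A(t_2)-A(t_1)\bigr)=\int_{\{t_1<u<t_2\}}|D^2u|^2\,dx$, the relation $B'(t)=2(n-1)A(t)$, the value $\Psi(0^+)=(n-2)^3\omega_n\bigl(\Capa(\Omega)/((n-2)\omega_n)\bigr)^{(n-4)/(n-2)}$, and the algebra writing the left side of \eqref{H1} as $\frac{1}{2(n-1)}\Psi'(1)$ and the deficit in \eqref{H2} as $\frac12\bigl(\Psi'(1)+\Psi(1)-\Psi(0^+)\bigr)$ all check out. The genuine gap is that the monotonicity $\Psi'\ge 0$ --- the only place where an inequality, rather than an identity, enters --- is never proved; you flag it yourself as ``the main obstacle'' and offer only the hope that after ``a further integration by parts'' the derivative ``should appear'' in a manifestly nonnegative form. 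Note that $\Psi'(t)\ge 0$, i.e. $tA(t)\ge\frac{1}{n-2}B(t)$, at a fixed $t$ is precisely the paper's inequality \eqref{BiCi1} applied to the domain $\{u>t\}$ (whose capacitary potential is $u/t$); so the unproved step is not a technical remainder but is, level set by level set, equivalent in strength to the statement being proved. Supplying it requires real work in the spirit of \cite{AM}: a Gr\"{o}nwall-type argument combining your Bochner identity and Hessian decomposition with a Cauchy--Schwarz step, together with control of the critical set of $u$ (where the factor $1/|Du|$ in the coarea formula degenerates) and refined asymptotics at infinity.

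There is a second, related flaw in your rigidity step. You argue that $\Psi'(1)=0$ forces ``the nonnegative integrand defining $\Psi'(1)$'' to vanish pointwise on $\partial\Omega$. But $\Psi'(1)=2(n-1)\int_{\partial\Omega}|Du|^2\bigl[H_{\partial\Omega}-\frac{|Du|}{n-2}\bigr]\,d\mathcal{H}^{n-1}$, and this integrand is \emph{not} pointwise nonnegative; its nonnegativity after integration is exactly the global statement under proof. In any derivation of the monotonicity along the lines you sketch, $\Psi'(t)$ is bounded below by a bulk integral over the exterior region $\{u<t\}$ of nonnegative quantities ($|\nabla_T|Du||^2$, the umbilicity deficit, a Cauchy--Schwarz deficit), so vanishing of $\Psi'(1)$ forces these to vanish on all level sets, and radiality follows from that bulk information, not from pointwise conclusions on $\partial\Omega$ alone. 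For contrast, the paper gets both the inequality and the rigidity at once and elementarily: with $v=u^{-2/(n-2)}$, Newton's inequality \eqref{Newton} is inserted into the single divergence identity of Lemma \ref{lemma_identities} with the two exponents $\gamma=1-n$ and $\gamma=-n/2$; integrating over $B_R\setminus\Omega$ and letting $R\to\infty$ yields \eqref{BiCi1}--\eqref{BiCi2}, and equality in Newton's inequality immediately gives $D^2v=c\,\mathrm{Id}$, hence radial symmetry. To complete your route you must prove the monotonicity formula and carry out its equality case; alternatively, the paper's argument shows how to avoid it altogether.
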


We mention that in the case that constraint \eqref{H1} holds, Theorem \ref{thm_main} was already proven in \cite{AM}. Indeed, in \cite[Theorem 1.1]{AM} the authors prove the symmetry result by using a conformal reformulation of the problem and by proving the rotational symmetry via a splitting argument. In this respect, we give a different proof of this theorem.

Our approach is very simple and use a chain of integral identities and a basic inequality for symmetric elementary functions (known as Newton's inequality), as in the spirit of \cite{BNST,CS, CRS}.
More precisely, by considering the auxiliary problem for the function 
$$
v=u^{-\frac{2}{n-2}} \,,
$$ 
where $u$ solves \eqref{capacity_EL},
we prove that $v$ must be quadratic, and hence the capacitary function $u$ has radial symmetry. This approach is very flexible and it has been extended to more general settings \cite{BC,BCS}.

It is interesting to notice that from the proof of Theorem \ref{thm_main} (see Step 1 in Section \ref{sect_proof}) we immediately obtain the following lower bound for the capacity, for $n=3$:
\begin{equation} \label{lowerbound}
\Capa (\Omega) \int_{\bd\Om}|Du|^2\left[ (n-1) H_{\partial \Omega} - \frac{n|Du|}{2(n-2)}\right]  d\haus \geq  \frac{(n-2)^3}{2} \omega_n  \,. \end{equation}
This lower bound is optimal, in the sense that the equality sign is attained when $\Omega$ is a ball.

\subsection*{Acknowledgements.} The work has been supported by the FIRB project 2013 ``Geometrical and Qualitative aspects of PDE'' and the GNAMPA of the Istituto Nazionale di Alta Matematica (INdAM).

\section{Preliminaries}
We use the following notation. Let $A = (a_{ij})$ be a $n\times n$ symmetric matrix. We denote by $S_k(A)$, $k\in\{1,\ldots,n\} ,$ the sum of all the principal minors of $A$ of order $k$, so that $S_1(A)=tr(A)$ and $S_n(A)=det(A)$. 
Denoting by 
$$
S^k_{ij}(A) = \frac{\pa }{\pa a_{ij}} S_k(A), 
$$
it holds
$$
S_k(A) = \frac{1}{k} S^k_{ij} (A) a_{ij} ,
$$ 
where here and later the Einstein summation convention is applied.
In particular for $k=2$ 
$$
S^2_{ij}(A) = \frac{\pa }{\pa a_{ij}} S_2(A) = 
\begin{cases} 
-a_{ji} & i \neq j \\
\sum_{k\neq i} a_{kk}  & i=j \,.
\end{cases} 
$$

Notice that $S_k(A)$ are the $k$-th elementary symmetric function of the eigenvalues of $A$; so that
$$
S_k (A) = S_k (\lambda_1,...,\lambda_n) =  \sum_{1\le i_1< ...<i_k\le n} \lambda_{i_1}\cdot ...\cdot \lambda_{i_n},
$$ 
where $\lambda_i$ are the eigenvalues of the matrix $A$.

When $A=D^2v$ we have that 
$$
S_k(D^2v) = \frac{1}{k} \diver (S^k_{ij} (D^2v) v_j) \,,
$$
which follows from the fact that the vector $(S^k_{i1} (D^2v) ,\ldots, S^k_{in} (D^2v) )$ is divergence free for $i=1,\ldots,n$, i.e.
$$
\frac{\pa}{\pa x_{j}} S^k_{ij}(D^2v) = 0,\quad  i=1,\ldots,n .
$$
In particular, for $k=2$ we have
$$
S_2(D^2v) = \frac{1}{2} S^2_{ij}(D^2v) v_{ij} = \frac{1}{2} \diver \Big(S^2_{ij}(D^2v) v_j \Big) \,,
$$
where
$$
S^2_{ij}(D^2v) = \frac{\pa}{\pa v_{ij}} S_2(D^2v) = 
\begin{cases} 
-v_{ji} & i \neq j \\
\Delta v - v_{ii} & i=j \,.
\end{cases} 
$$
Notice that if $L_t=\{v>t\}$ is a super level set of $v$, then 
\begin{equation} \label{pallino}
|Dv|^2 \Delta v = (n-1) H_{\partial L_t} |Dv|^3 + v_i v_{ij}  v_j  \quad \textmd{ on } \partial L_t \,, 
\end{equation}
so that if $\partial L_t$ is oriented such that $\nu= Dv/|Dv|$ then
\begin{equation} \label{pare}
S^2_{ij}( D^2v) v_i v_j = (n-1) H_{\partial L_t}  |Dv|^3  \quad \textmd{ on } \partial L_t \,.
\end{equation}
Two crucial ingredients in the proof of Theorem \ref{thm_main} are contained in next lemmas.
\begin{lem}	[Newton Inequality]\label{lemma-newton}
	Let $A$ be a symmetric matrix in $\RR^{n\times n}$; it holds
	\begin{equation}\label{Newton}
	S_2{(A)}\le\frac{n-1}{2n}\tr(A)^2\, .
	\end{equation}
	Moreover, if ${\tr} (A)\neq 0$ and equality holds in (\ref{Newton}), then
	\begin{equation*}
	A=\frac{{\tr}(A)}{n}\, I \,.
	\end{equation*}
\end{lem}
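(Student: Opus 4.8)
The plan is to diagonalize and reduce the inequality to a one-line application of the Cauchy--Schwarz inequality. Since $A$ is symmetric it can be orthogonally diagonalized, and both $S_2(A)$ and $\tr(A)$ are expressible through the eigenvalues $\lambda_1,\dots,\lambda_n$ of $A$: indeed $\tr(A)=S_1(A)=\sum_i \lambda_i$ and $S_2(A)=\sum_{i<j}\lambda_i\lambda_j$, as recalled in the preliminaries. The first step is to record the elementary identity
$$
\Big(\sum_{i=1}^n \lambda_i\Big)^2 = \sum_{i=1}^n \lambda_i^2 + 2\sum_{1\le i<j\le n}\lambda_i\lambda_j,
$$
which rewrites $S_2(A)=\tfrac12\big(\tr(A)^2 - \sum_i \lambda_i^2\big)$.

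Substituting this expression, the claimed bound $S_2(A)\le \frac{n-1}{2n}\tr(A)^2$ is, after clearing denominators, equivalent to
$$
\tr(A)^2 = \Big(\sum_{i=1}^n \lambda_i\Big)^2 \le n\sum_{i=1}^n \lambda_i^2 .
$$
This is precisely the Cauchy--Schwarz inequality applied to the vectors $(\lambda_1,\dots,\lambda_n)$ and $(1,\dots,1)$ in $\RR^n$ (equivalently, the inequality between the arithmetic and the quadratic mean of the eigenvalues), so inequality \eqref{Newton} follows immediately.

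For the equality case I would invoke the sharpness condition in Cauchy--Schwarz: equality holds if and only if $(\lambda_1,\dots,\lambda_n)$ is parallel to $(1,\dots,1)$, that is, all the eigenvalues coincide, $\lambda_1=\dots=\lambda_n=:\mu$. Then $\tr(A)=n\mu$, so $\mu=\tr(A)/n$, and in the orthonormal eigenbasis $A$ is the diagonal matrix $\mu I$; since $\mu I$ is invariant under orthogonal conjugation this yields $A=\frac{\tr(A)}{n}I$, as asserted. The hypothesis $\tr(A)\neq 0$ plays no role in the inequality itself and only serves to present the extremal matrix in the stated normalized form.

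I do not expect a genuine obstacle here, since the statement is a classical fact; the only point requiring a little care is the equality analysis, where one must pass from ``all eigenvalues equal'' back to the matrix $A$ through the spectral theorem rather than reading it off the entries directly.
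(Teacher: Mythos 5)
Your proof is correct and complete. One point of comparison worth noting: the paper states Lemma~\ref{lemma-newton} \emph{without proof}, treating it as a classical fact, so there is no in-paper argument to measure yours against; your diagonalization of $A$ via the spectral theorem, the identity $S_2(A)=\tfrac12\bigl(\tr(A)^2-\sum_i\lambda_i^2\bigr)$, and the reduction to the Cauchy--Schwarz bound $\bigl(\sum_i\lambda_i\bigr)^2\le n\sum_i\lambda_i^2$ constitute the standard proof and would serve as a legitimate self-contained justification of the lemma. Your equality analysis is also sound: with the fixed vector $(1,\dots,1)\neq 0$, equality in Cauchy--Schwarz forces $\lambda_1=\dots=\lambda_n$, and passing back through the spectral theorem (as you correctly insist one must, since $\mu I$ commutes with orthogonal conjugation) gives $A=\frac{\tr(A)}{n}I$. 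Your closing observation is accurate as well: for this $k=2$ case the hypothesis $\tr(A)\neq 0$ is genuinely superfluous, since if $\tr(A)=0$ and equality holds then all eigenvalues vanish and $A=0=\frac{\tr(A)}{n}I$ anyway; the hypothesis is a vestige of the general Newton inequalities $p_k^2\ge p_{k-1}p_{k+1}$, where for $k\ge 2$ equality can occur with eigenvalues not all equal (e.g.\ when enough of them vanish), so a nondegeneracy assumption is needed there but not here.
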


\begin{lem} \label{lemma_identities}
For any smooth function $v$ and $\gamma \in \mathbb{R}$ we have the following identity:
\begin{multline}\label{identityC}
2 v^{\gamma} S_2(D^2v) = \\ 
=  \diver\Big( \frac{\gamma}{2} v^{\gamma-1}|Dv|^2 Dv + v^{\gamma} S^2_{ij}(D^2v) v_i \Big) - \frac{3}{2} \gamma v^{\gamma-1} |Dv|^2 \Delta v - \frac{\gamma(\gamma-1)}{2} v^{\gamma-2} |Dv|^4   \,.  
\end{multline}
\end{lem}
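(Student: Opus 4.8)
The plan is to prove the identity \eqref{identityC} by a direct computation, expanding the divergence on the right-hand side and collecting terms until the left-hand side $2v^\gamma S_2(D^2v)$ emerges. The starting point is the observation already recorded in the Preliminaries, namely
\begin{equation*}
S_2(D^2 v) = \tfrac{1}{2}\diver\bigl(S^2_{ij}(D^2v)\,v_j\bigr),
\end{equation*}
which rewrites $2v^\gamma S_2(D^2v)$ as $v^\gamma\,\diver\bigl(S^2_{ij}(D^2v)\,v_j\bigr)$. The natural strategy is then to \emph{move the scalar factor $v^\gamma$ inside the divergence}, since $v^\gamma\,\diver(W) = \diver(v^\gamma W) - Dv^\gamma\cdot W$ for any vector field $W$. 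Applying this with $W_i = S^2_{ij}(D^2v)\,v_j$ produces the divergence term $\diver\bigl(v^\gamma S^2_{ij}(D^2v)\,v_j\bigr)$ together with the correction $-\gamma v^{\gamma-1} v_i\, S^2_{ij}(D^2v)\, v_j$.

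The heart of the computation is then to simplify the correction term $\gamma v^{\gamma-1}\,S^2_{ij}(D^2v)\,v_i v_j$. Using the explicit form of $S^2_{ij}(D^2v)$ from the Preliminaries (namely $-v_{ji}$ off-diagonal and $\Delta v - v_{ii}$ on the diagonal), one computes
\begin{equation*}
S^2_{ij}(D^2v)\,v_i v_j = |Dv|^2\,\Delta v - v_i v_{ij} v_j.
\end{equation*}
This algebraic identity is exactly the content of \eqref{pallino} (read without the level-set geometry), so the correction term becomes $\gamma v^{\gamma-1}\bigl(|Dv|^2 \Delta v - v_i v_{ij} v_j\bigr)$. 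At this stage the $v_i v_{ij} v_j$ piece must be matched against the gradient term $\tfrac{\gamma}{2} v^{\gamma-1}|Dv|^2 Dv$ that appears inside the target divergence. The key subcomputation here is
\begin{equation*}
\diver\bigl(\tfrac{\gamma}{2} v^{\gamma-1}|Dv|^2 Dv\bigr) = \tfrac{\gamma}{2} v^{\gamma-1}|Dv|^2 \Delta v + \tfrac{\gamma(\gamma-1)}{2} v^{\gamma-2}|Dv|^4 + \gamma v^{\gamma-1} v_i v_{ij} v_j,
\end{equation*}
where I used $\pa_i(|Dv|^2) = 2 v_j v_{ij}$ and $D v^{\gamma-1} = (\gamma-1) v^{\gamma-2} Dv$.

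Substituting this expansion into the right-hand side of \eqref{identityC} and combining with the correction term from the first paragraph, the $v_i v_{ij} v_j$ contributions cancel and the $v^{\gamma-2}|Dv|^4$ contributions cancel against the explicit $-\tfrac{\gamma(\gamma-1)}{2}v^{\gamma-2}|Dv|^4$ term, while the various $v^{\gamma-1}|Dv|^2\Delta v$ multiples combine (with the explicit $-\tfrac{3}{2}\gamma v^{\gamma-1}|Dv|^2\Delta v$ term) to leave precisely $v^\gamma\,\diver\bigl(S^2_{ij}(D^2v)\,v_j\bigr) = 2v^\gamma S_2(D^2v)$. I expect the only delicate point to be the careful bookkeeping of the several $|Dv|^2\Delta v$ and $v_i v_{ij}v_j$ coefficients and their signs; there is no conceptual obstacle, only the need to track each term through the two divergence expansions without arithmetic slips. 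Since the claim is stated for arbitrary $\gamma\in\mathbb{R}$ and smooth $v$, no further hypotheses or limiting arguments are needed, and the identity holds pointwise.
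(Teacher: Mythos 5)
Your proof is correct and takes essentially the same route as the paper's: the paper merely packages your two subcomputations as the displayed identities \eqref{identityA} (pulling $v^{\gamma}$ inside $\diver\bigl(S^2_{ij}(D^2v)\,v_j\bigr)$, using the divergence-free rows of $S^2_{ij}$) and \eqref{identityB} (combining $S^2_{ij}(D^2v)\,v_iv_j = |Dv|^2\Delta v - v_iv_{ij}v_j$ with the expansion of $\diver\bigl(v^{\gamma-1}|Dv|^2 Dv\bigr)$). The coefficient bookkeeping you flagged does close up exactly as you predict, so nothing is missing.
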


\begin{proof}
We notice that for $\gamma=0$ \eqref{identityC} is just the definition of $S_2$ and then we may assume $\gamma \neq 0$. 
Identity \eqref{identityC} immediately follows from the following two identities:
\begin{equation}\label{identityA}
 \diver (v^{\gamma} S^2_{ij}(D^2v) v_i )  =2 v^{\gamma}S_2(D^2v) + \gamma v^{\gamma-1} S^2_{ij}(D^2v) v_iv_j  \,,
\end{equation}
and
\begin{equation}\label{identityB}
v^{\gamma-1}S^2_{ij}(D^2v)v_i v_j = \frac32 v^{\gamma-1}|Dv|^2 \Delta v + \frac{\gamma-1}{2} v^{\gamma-2} |Dv|^4 - \frac{1}{2} \diver(v^{\gamma-1} |Dv|^2 Dv ) \,.
\end{equation}
Identity \eqref{identityA} is readily obtained from $ \gamma v^{\gamma-1} v_i= (v^{\gamma})_i$ and 
$$
S_2(D^2v)= \frac{1}{2} S^2_{ij}(D^2v) v_{ij} = \frac12 \diver(S^2_{ij}(D^2v) v_i) \,.
$$
To prove \eqref{identityB} we notice that, since 
$$
S^2_{ij}(D^2v) v_i v_j = |Dv|^2 \Delta v - v_i v_j v_{ij} \,,
$$
we have that
\begin{equation*}
\begin{split}
v^{\gamma-1} S^2_{ij} (D^2v) v_i v_j & = v^{\gamma-1} |Dv|^2 \Delta v - v^{\gamma-1} v_iv_j v_{ij} \\
& = v^{\gamma-1} |Dv|^2 \Delta v + \frac12 \big[ -\diver(v^{\gamma-1}|Dv|^2Dv) + (\gamma-1) v^{\gamma-2}|Dv|^4 + v^{\gamma-1} |Dv|^2 \Delta v \big]\\
& = \frac32 v^{\gamma-1} |Dv|^2 \Delta v + \frac{\gamma-1}{2} v^{\gamma-2} |Dv|^4 - \frac12 \diver(v^{\gamma-1} |Dv|^2 Dv ) \,,
\end{split}
\end{equation*}
which gives \eqref{identityB}. 
\end{proof}

We conclude this section by recalling some well-known properties of the capacitary potential (see \cite{Ke}) which will be useful for the proof of Theorem \ref{thm_main}: 
\begin{equation} \label{u_asymptotic}
\begin{split}
& u= \frac{\Capa(\Omega)}{(n-2) \omega_n} |x|^{2-n} + o(|x|^{2-n}) \,, \\
& u_i= -\frac{\Capa(\Omega)}{\omega_n} |x|^{-n} x_i + o(|x|^{1-n}) \,, \\
& u_{ij}= \frac{\Capa(\Omega)}{\omega_n}  |x|^{-n} \left(n \frac{x_ix_j}{|x|^2} - \delta_{ij} \right) + o(|x|^{-n}) \,,
\end{split}
\end{equation}
as $|x| \to + \infty$.

\section{Proof of Theorem \ref{thm_main}} \label{sect_proof}
\emph{Step 1.} We prove that the reverse inequality holds in \eqref{H1} and \eqref{H2}. More precisely, we shall prove that if $u$ is a solution of \eqref{capacity_EL}, then it satisfies
\begin{equation}\label{BiCi1}
\int_{\bd\Om}|Du|^2\left( H_{\partial \Omega} - \frac 1{n-2}\frac{|Du|}{u}\right) d\haus\ge 0 \,,
\end{equation}
and 
\begin{equation}\label{BiCi2}
\int_{\bd\Om}|Du|^2\left((n-1) H_{\partial \Omega} - \frac{n}{2(n-2)}\frac{|Du|}{u}\right) d\haus\ge \frac{(n-2)^3}{2} \omega_n  \left(\frac{\Capa(\Omega)}{(n-2) \omega_n} \right)^{\frac{n-4}{n-2}}  \,.
\end{equation}

The proof of \eqref{BiCi1} and \eqref{BiCi2} is based on Lemma \ref{lemma_identities} and the Newton Inequality \eqref{Newton} applied to the Hessian matrix of the function $v=u^{-\frac 2{n-2}}$. 
Notice that the function $v$ solves
\begin{equation}\label{pbv}
\begin{cases}
\Delta v=\frac n2 \frac{|Dv|^2}{v}\qquad&\text{ in } \RR^n \setminus\overline{\Om},\\
v=1\qquad&\text{ on }\bd\Om,\\
v\to\infty\qquad&\text{ as } |x| \to + \infty.
\end{cases}
\end{equation}
Moreover, it follows from \eqref{u_asymptotic} that $v$ satisfies
\begin{equation} \label{v_asymptotic}
\begin{split}
& v= \left(\frac{\Capa(\Omega)}{(n-2) \omega_n} \right)^{-\frac{2}{n-2}} |x|^2 + o(|x|^2) \,, \\
& v_i = 2 \left(\frac{\Capa(\Omega)}{(n-2) \omega_n} \right)^{-\frac{2}{n-2}} x_i + o(|x|) \,, \\ 
& v_{ij} =  2\left(\frac{\Capa(\Omega)}{(n-2) \omega_n} \right)^{-\frac{2}{n-2}} \delta_{ij} + o(1) \,, 
\end{split}
\end{equation}
as $|x| \to +\infty$.

We are ready to give the proof of \eqref{BiCi1} and \eqref{BiCi2}. Let $\gamma$ be a fixed parameter to be chosen later and consider \eqref{identityC} applied to the function $v$, solution of \eqref{pbv}. From \eqref{Newton} we have that 
\begin{multline*}
v^{\gamma} \frac{n-1}{n} (\Delta v)^2 \geq \\ 
\geq \diver(v^{\gamma}\sdue{W}v_i) +  \frac{\gamma}{2}\diver(v^{\gamma-1}|Dv|^2 Dv) - \frac{3}{2} \gamma v^{\gamma-1}|Dv|^2\Delta v- \frac\gamma 2(\gamma-1)v^{\gamma-2}|Dv|^4.
\end{multline*}
Since $v$ satisfies \eqref{pbv}, we obtain that 
\begin{equation}\label{divervgammaS2vi}
\diver(v^{\gamma}\sdue{D^2v}v_i)+\frac{\gamma}{2}\diver(v^{\gamma-1}|Dv|^2Dv)\le |Dv|^4v^{\gamma-2}\left(\frac n4(n-1)-\frac \gamma 2(1-\gamma)+\frac 32\gamma\frac n2\right) \,.
\end{equation}
Now, we make our choiche of $\gamma$ so that the right hand side of the above inequality vanishes. This is achieved for $\gamma_1=1-n$ and $\gamma_2=-n/2$. Hence, by choosing $\gamma=\gamma_i$, $i=1,2$, we obtain that $v$ satisfies the following inequality in $\RR^n \setminus \Omega$:
	\begin{equation*}\label{diverle0}
	\diver(v^{\gamma}\sdue{D^2v}v_i)+\frac{\gamma}{2}\diver(v^{\gamma-1}|Dv|^2Dv)\le 0.
	\end{equation*}

Let $R>0$ be such that $\overline{\Omega} \subset B_R$. We integrate the last inequality over $B_R \setminus \Omega$ and apply the divergence theorem: from \eqref{pare} and since $\nu=Dv/|Dv|$ on $\partial \Omega$ we have that
\begin{multline}\label{X}
\int_{\partial \Omega} \left(v^\gamma (n-1) H_{\partial \Omega} |Dv|^2 + \frac{\gamma}{2} v^{\gamma -1} |Dv|^3 \right) d\haus \geq \\
\geq \int_{\partial B_R} \left( v^{\gamma}\sdue{D^2v}v_i\nu_{B_R}^j+\frac{\gamma}{2}v^{\gamma-1}|Dv|^2v_j\nu_{B_R}^j \right) d\haus \,,
\end{multline}
where $\nu_{B_R}$ denotes the outer unit normal vector to $B_R$. 
Now we notice that if $\gamma=\gamma_1$, then \eqref{v_asymptotic} implies that 
\begin{equation}\label{limR=0}
\lim_{R\to\infty}\int_{\bd B_R} v^{\gamma_1}\sdue{D^2v}v_i\nu_{B_R}^j+\frac{\gamma_1}{2}v^{\gamma_1-1}|Dv|^2v_i\nu_{B_R}^i=0,
\end{equation}
while if $\gamma=\gamma_2$ then \eqref{v_asymptotic} yields that 
\begin{equation}\label{limR=capa}
\lim_{R\to\infty}\int_{\bd B_R} v^{\gamma_2}\sdue{D^2v}v_i\nu_{B_R}^j+\frac{\gamma_2}{2}v^{\gamma_2-1}|Dv|^2v_i\nu_{B_R}^i= 2 (n-2) \omega_n  \left(\frac{\Capa(\Omega)}{(n-2) \omega_n} \right)^{\frac{n-4}{n-2}}  \,,
\end{equation}
since $\partial B_R$ is asymptotically a level set of $v$.

By using the fact that $v=1$ on $\Om$ and coupling \eqref{X} and (\ref{limR=0}), we obtain 
	$$
	\int_{\bd\Om} |Dv|^2\left( H_{\partial \Omega} -\frac 12\frac{|Dv|}{v} \right)  \ge 0 \,,
	$$
while from \eqref{X} and \eqref{limR=capa} we find	
$$
	\int_{\bd\Om} |Dv|^2\left((n-1)H_{\partial \Omega} - \frac n4\frac{|Dv|}{v} \right)  \ge 2 (n-2) \omega_n  \left(\frac{\Capa(\Omega)}{(n-2) \omega_n} \right)^{\frac{n-4}{n-2}}  \,.
	$$
By recalling that $v=u^{-\frac 2{n-2}}$, from the last two inequalities we immediately obtain \eqref{BiCi1} and \eqref{BiCi2}.

\emph{Step 2.} From Step 1 we have that the equality sign holds in \eqref{H1} and \eqref{H2}. This means that the equality sign holds in Newton inequality, which implies that for every $x \in \RR^n \setminus \Omega$ there exists a constant $\lambda(x)$ such that
$$
D^2 v(x) = \lambda(x) Id.
$$
It is easy to see that $\lambda$ must be constant. Indeed, let $i=1, \ldots,n$ be fixed and chose any $j \neq i$; we have that
$$
\partial_{x_i} \lambda (x) = \partial_{x_i} u_{x_j x_j} =  \partial_{x_j} u_{x_j x_i} = 0 \,,
$$
which implies that $\lambda$ is constant. Hence, 
\begin{equation} \label{D2v}
D^2 v = c Id \,.
\end{equation} 
From \eqref{pbv} we find that $|Dv|$ is constant on every level surface of $v$. In particular, $|Dv|$ is constant on $\partial \Omega$ and hence from \eqref{pallino} and \eqref{D2v} we find that $H_{\partial\Omega}$ is constant and by using Alexandrov Theorem we conclude that $\Omega$ is a ball. The proof is complete.


\end{document}